\theoremstyle{plain}
\newtheorem{theorem}{Theorem}[section]
\newtheorem{lemma}[theorem]{Lemma}
\newtheorem{proposition}[theorem]{Proposition}
\newtheorem{corollary}[theorem]{Corollary}
\newtheorem{MainTheorem}{Theorem}
\newtheorem{MainCorollary}[MainTheorem]{Corollary}
\theoremstyle{definition}
\newtheorem{remark}[theorem]{Remark}
\def\@map#1#2[#3]{\mbox{$#1 \colon #2 \rightarrow #3$}}
\def\map#1#2{\@ifnextchar [{\@map{#1}{#2}}{\@map{#1}{#2}[#2]}}
\newcommand{\evalat}[1]{\bigr\rvert_{#1}}
\newcommand{\set}[2]{\ensuremath{\left\{#1 \,\colon #2\right\}}}
\newcommand{\chull}[1]{\ensuremath{\left\langle #1 \right\rangle}}
\providecommand{\abs}[1]{\lvert#1\rvert}
\newcommand{\eps}{\varepsilon}
\renewcommand{\phi}{\varphi}
\newcommand{\R}{\mathbb{R}}
\newcommand{\N}{\mathbb{N}}
\newcommand{\A}{\mathcal{A}}
\newcommand{\tcal}{\mathcal{T}}
\newcommand{\ccal}{\mathcal{C}}
\newcommand{\NSM}{\mathcal{PM}}
\newcommand{\M}{\mathcal{PSM}}
\newcommand{\NDC}{\mathcal{H}}
\newcommand{\CS}{\mathcal{CS}}
\newcommand{\Markov}{\mathcal{M}}
\DeclareMathOperator{\dist}{dist}
\DeclareMathOperator{\Card}{Card}
\DeclareMathOperator{\Cl}{Cl}
\title[Semiconjugacy to a map of a constant slope]{Semiconjugacy to a map\\ of a constant slope}
\author{Llu\'{\i}s Alsed\`a}
\address{Departament de Matem\`atiques, Edifici Cc, Universitat
Aut\`onoma de Barcelona, 08913 Cerdanyola del Vall\`es, Barcelona,
Spain}
\email{alseda@mat.uab.cat}
\author{Micha\l\ Misiurewicz}
\address{Department of Mathematical Sciences, IUPUI, 402 N. Blackford
    Street, Indianapolis, IN 46202}
\email{mmisiure@math.iupui.edu}
\thanks{The first author has been partially supported by the MEC grant
numbers MTM2008-01486 and MTM2011-26995-C02-01.}
\subjclass[2010]{Primary: 37E05, 37B40}
\keywords{Piecewise monotonotone maps,
semiconjugacy to a map of constant slope,
topological entropy,
interval Markov maps,
measure of maximal entropy}
\date{August 6, 2014}
\begin{document}
\begin{abstract}
It is well known that a continuous piecewise monotone
interval map with positive topological entropy is semiconjugate to a
map of a constant slope and the same entropy, and if it is
additionally transitive then this semiconjugacy is actually a
conjugacy. We generalize this result to
piecewise continuous piecewise monotone interval maps, and
as a consequence, get it also for piecewise monotone graph maps.
We show that assigning to a continuous transitive piecewise
monotone map of positive entropy a map of constant slope conjugate to
it defines an operator, and show that this operator is not continuous.
\end{abstract}
\maketitle

\section{Introduction}\label{sec-int}

It has been known for a long time that a continuous piecewise monotone
interval map with positive topological entropy is semiconjugate to a
map of a constant slope and the same entropy~\cite{MiTh}, and if it is
additionally transitive, then it is conjugate to a map of a constant
slope and the same entropy~\cite{Parry}. The proof by Milnor and
Thurston uses kneading theory and is quite complicated. In the
book~\cite{ALM} we gave a simpler proof, that does not use kneading
theory. Here we present a modification of this proof, that works for
piecewise continuous piecewise monotone interval maps, and
consequently, also for piecewise monotone graph maps.

At the end we show that assigning to a continuous transitive piecewise
monotone map of positive entropy a map of constant slope conjugate to
it defines an operator and show that this operator is not continuous.

To avoid misunderstanding about terminology, let us mention that we
will be speaking about \emph{increasing}
(resp.\ \emph{decreasing, monotone}) and \emph{strictly increasing}
(resp.\ \emph{strictly decreasing, strictly monotone}) maps.
Also, when $f$ is piecewise affine we define the \emph{slope of $f$}
as $\abs{f'(x)}$ for every $x$ where $f$ is differentiable.

We will denote by $\NSM$ the class of interval maps which are
piecewise continuous piecewise monotone (with finitely many pieces),
and by $\M$ the class of all maps from $\NSM$ that are piecewise
strictly monotone. The interval we consider is compact and we will
denote it $I$. The maximal intervals where a map $f$ is simultaneously
monotone and continuous will be called the \emph{laps of $f$}. We
consider them closed and do not worry that if a point is the common
end of two laps then it can have two images. Since we want two laps to
have at most one point in common, if the maximal intervals of
continuity and monotonicity have a larger intersection, we redefine
the laps as slightly shorter intervals. Note that this problem does
not occur if the map is in $\M$. Note also that any lap is a non-degenerate
interval.

For every two points $x,y\in\R$ we denote by $\chull{x,y}$ the
smallest closed interval containing those two points (degenerated to a
point if $x=y$).

We are interested in semiconjugacies $\psi$ of a given map $f \in \M$
of positive entropy to maps $\map{g}{[0,1]}$ with constant slope. This
slope will be $\beta=\exp(h(f))$, where $h(f)$ is the topological
entropy of $f$.

We shall restrict our attention to the most natural class of
semiconjugacies, namely increasing ones.
Additionally we will require that if a semiconjugacy $\psi$
maps a lap $J$ to a point, then it also maps the interval
$f(J)$ to a point.
We will call this property \emph{$f$-compatibility}.

We shall denote the class of
increasing continuous maps from $I$ onto $[0,1]$ by $\NDC$
and the class of $f$-compatible elements of $\NDC$ by $\NDC_f$.
Since an inverse image of a point under an increasing continuous map is a
closed interval (perhaps degenerate), a map from $\NDC$ just
identifies to a point each element of some family of disjoint closed
intervals and rescales what is left to get the interval
$[0,1]$. One of the advantages of such an approach is that we have the
following result.

\begin{lemma}\label{E4a.0}
Let $f\in \NSM$, $\psi\in\NDC$ and let $\map{g}{[0,1]}$ be a map such
that $\psi\circ f = g \circ\psi$.
Then for every lap $J$ of $f$ the map $g$ is continuous on $\psi(J)$.
Moreover, if $f\evalat{J}$ is increasing (respectively decreasing)
then $g\evalat{\psi(J)}$ is increasing (respectively decreasing).
In particular, $g \in \NSM$ and the number of laps of $g$ does not
exceed the number of laps of $f$.
\end{lemma}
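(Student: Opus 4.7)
The plan is to fix a lap $J$ of $f$, set $K=\psi(J)$, and prove the monotonicity and continuity statements for $g$ on $K$; the final assertion then follows because the finitely many laps of $f$ cover $I$ and $\psi$ is onto $[0,1]$.

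For the monotonicity claim, I would exploit that $\psi$ is \emph{increasing} and continuous, so each fiber $\psi^{-1}(y)$ is a closed (possibly degenerate) interval, and if $y_1<y_2$ then every point of $\psi^{-1}(y_1)$ lies strictly to the left of every point of $\psi^{-1}(y_2)$. Given $y_1<y_2$ in $K$, I can therefore pick $x_i\in\psi^{-1}(y_i)\cap J$ with $x_1<x_2$. If $f\evalat{J}$ is increasing, then $f(x_1)\le f(x_2)$; applying $\psi$ and using the semiconjugacy relation $\psi\circ f=g\circ\psi$ gives $g(y_1)\le g(y_2)$. The decreasing case is identical, with inequalities reversed.

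For continuity on $K$, I would argue by sequential compactness of the lap $J$. Take $y\in K$ and $y_n\to y$ in $K$, and choose $x\in\psi^{-1}(y)\cap J$ and $x_n\in\psi^{-1}(y_n)\cap J$. To conclude $g(y_n)\to g(y)$, it suffices to show every subsequence has a further subsequence along which $g(y_n)\to g(y)$. Given any subsequence, compactness of $J$ yields a sub-subsequence $x_{n_k}\to x^\ast\in J$; continuity of $\psi$ forces $\psi(x^\ast)=y$, and continuity of $f$ on the lap $J$ gives $f(x_{n_k})\to f(x^\ast)$. Then
\[
 g(y_{n_k})=\psi(f(x_{n_k}))\longrightarrow \psi(f(x^\ast))=g(\psi(x^\ast))=g(y),
\]
again using the semiconjugacy. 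The crucial input here, and the main (mild) obstacle, is that even though $\psi$ may collapse subintervals to points so the choice of $x_n$ is not canonical, any such choice lands in the compact lap $J$ on which $f$ is continuous; that is exactly what makes the subsequence extraction work.

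Finally, since $f$ has finitely many laps $J_1,\dots,J_n$ whose union is $I$, and $\psi(I)=[0,1]$, the images $K_i=\psi(J_i)$ are finitely many closed subintervals covering $[0,1]$. On each nondegenerate $K_i$ the previous two paragraphs show that $g$ is continuous and (weakly) monotone in the same direction as $f\evalat{J_i}$; degenerate images contribute nothing. Hence $g$ is piecewise continuous and piecewise monotone on $[0,1]$, so $g\in\NSM$, and since each lap of $g$ must be contained in some $K_i$ we obtain the bound on the number of laps.
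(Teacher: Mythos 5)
Your proposal is correct. The monotonicity part is essentially identical to the paper's: both choose preimages $x_1<x_2$ in $J$ of the given points of $\psi(J)$ (using that $\psi$ is increasing, so its fibers are ordered intervals) and push the inequality $f(x_1)\le f(x_2)$ through the semiconjugacy. For continuity, however, you take a genuinely different route. The paper proves the set identity $\psi\bigl((\psi\circ f\evalat{J})^{-1}(K)\bigr)=(g\evalat{\psi(J)})^{-1}(K)$ for compact $K\subset[0,1]$ and concludes in one line that preimages of compact sets under $g\evalat{\psi(J)}$ are compact, hence that $g$ is continuous on $\psi(J)$; you instead run a sequential argument, choosing (non-canonical) preimages $x_n\in\psi^{-1}(y_n)\cap J$, extracting a convergent sub-subsequence inside the compact lap, and passing to the limit through $f\evalat{J}$ and $\psi$. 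Both arguments rest on the same two facts --- compactness of the lap and continuity of $f\evalat{J}$ and $\psi$ --- so the difference is one of packaging: the paper's version is shorter and avoids any choice of preimages, while yours makes explicit exactly where compactness enters and why the arbitrariness of the choice of $x_n$ is harmless. One cosmetic remark on your last paragraph: a lap of $g$ need not be \emph{contained} in a single $K_i$ (it could straddle several consecutive $K_i$ on whose union $g$ happens to stay continuous and monotone); the correct statement is that the $K_i$ give a cover of $[0,1]$ by at most as many intervals of continuity and monotonicity as $f$ has laps, so the maximal such intervals for $g$ are no more numerous. This does not affect the validity of the bound.
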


\begin{proof}
Let $K$ be a compact subset of $[0,1]$. Then, since $\psi$ is a
surjection,
\[
\psi((\psi\circ f\evalat{J})^{-1}(K))=(g\evalat{\psi(J)})^{-1}(K).
\]
Since $\psi$ and $\psi\circ f\evalat{J}$ are continuous, the above set
is compact. This proves that $g$ is continuous on $\psi(J)$.

Assume that $x,y \in \psi(J)$ and $x<y.$ Then, since $\psi$ is
increasing, there are points $x',y'\in J$ such that $\psi(x')=x$,
$\psi(y')=y$, and $x'<y'$.
Assume now that $f$ is increasing on $J$. We get
$g(x) = \psi(f(x')) \le \psi(f(y')) = g(y).$
Thus, $g$ is increasing on $\psi(J)$.
Similarly, if $f$ is decreasing on $J$ then $g$ is decreasing on
$\psi(J)$.
\end{proof}

\section{Dealing with piecewise continuity}\label{sec-pcont}

In most of definitions and theorems in dynamical systems one assumes
that the map which is iterated is continuous. Here we deal with
piecewise continuous maps, so we owe some explanations.

As we already mentioned, we do not really care what the actual values
at the points of discontinuity are. When necessary, we consider
one-sided limits, that exist because of monotonicity. All this can be
formalized (see, e.g.,~\cite{M,Raith}), but this would only make
reading the paper more difficult.

The next problem is the definition of topological entropy of a
discontinuous map. If $f\in\NSM$, the usual solution is to use the
formula
\begin{equation}\label{cn}
h(f)=\lim_{n\to\infty} \frac{1}{n} \log c_n(f),
\end{equation}
where $c_n(f)$ is the number of laps of $f^n$. This is a good
solution, since, as it was shown in~\cite{MZ}, this definition
coincides with the usual Bowen's definitions via $(n,\eps)$-separated
and $(n,\eps)$-spanning sets.

Another problem comes when we try to represent a continuous piecewise
monotone graph map as a piecewise continuous monotone interval map.
A \emph{graph map} is a continuous map form a (finite) graph into itself.
If $J$ is a connected subset of a graph and $f$ is a graph map,
we say that $f$ is \emph{monotone on $J$} if the $f\evalat{J}-$preimages
of points are connected.
A graph map $f$ is \emph{piecewise monotone} if the graph can be divided
into finitely many connected pieces such that $f$ is monotone on each of them.
In this case, we can represent $f$ as a piecewise continuous monotone interval map
by partitioning the graph into pieces, each of them homeomorphic to an interval,
and putting those pieces side by side to get a larger interval.
The cuts are necessary at the vertices of the graph,
and discontinuities may be created at their preimages.
This construction works nice, but we have to know that the
(topological) entropies of the graph map and the interval map are the
same.

Perhaps the simplest way of proving the equality of the entropies in the
above situation is to use the theorem on horseshoes. In both cases,
for continuous graph maps (by~\cite{LM}) and for piecewise continuous
piecewise monotone interval maps (by~\cite{MZ}), there exist sequences
$(k_n)_{n=1}^\infty$ and $(s_n)_{n=1}^\infty$ such that $f^{k_n}$ has
an $s_n$-horseshoe and
\[
\lim_{n\to\infty}\frac{1}{k_n}\log s_n
\]
is the entropy of the corresponding map. Moreover, this limit cannot
be larger than the entropy. The definitions of horseshoes are
consistent with the operator of cutting and putting pieces side by
side. Therefore, the entropies of both maps are equal.

\section{From $\NSM$ to $\M$}\label{sec-nsmm}

It is easier to work with maps that are piecewise strictly monotone,
rather than just piecewise monotone. Fortunately there exists a simple
way to reduce the semiconjugacy problem to the maps from $\M$.

Let $\map{f}{I}$ be a map from $\NSM$. We divide $I$ into the
laps of $f$. As we mentioned earlier, we modify the definition of a
lap in such a way that two different laps have at most one point in
common.

Now we apply the usual coding procedure. If there are $s$ laps, we
consider first the full one-sided $s$-shift $(\Sigma,\sigma)$, and
then restrict it to the set $\Sigma_f$ of those sequences
$(\eps_n)_{n=0}^\infty \in\Sigma$ for which there exists a point $x\in
I$ such that $f^n(x)$ is in the $\eps_n$-th lap for every $n$.
By~\eqref{cn}, the topological entropy of this subshift is equal to
$h(f)$. Clearly, the set
$\Sigma_f$ is $\sigma$-invariant. It is also
closed because the laps are compact.

Let us introduce the following relation on $I$. Given two points $x,y
\in I$ we write $x \sim y$ if they have the same codes. Clearly, $\sim$
is an equivalence relation and its equivalence classes are intervals
(perhaps degenerate) because on each lap $f$ is continuous and
monotone.
Moreover, those intervals are closed because laps are closed. Now we
can consider the quotient space $I/\kern-5pt\sim$ which is a new interval $\widehat{I}$.
The projection $\map{\psi}{I}[\widehat{I}]$ is in fact a continuous increasing map and it is
$f$-compatible.
If $x$ and $y$ are equivalent, then $f(x)$ and $f(y)$ are also
equivalent. Therefore $f$ factors to some map $\map{\widehat{f}}{\widehat{I}}$.
By Lemma~\ref{E4a.0}, we have $\widehat{f}\in\NSM$.
Moreover, $\widehat{f}\in\M$. Indeed, if $\widehat{f}$ is constant on some interval
then, by the definition of $\widehat{f},$
the preimage of this interval under $\psi$ consists of
equivalent points, a contradiction.

We claim that the topological entropies of $f$ and $\widehat{f}$ are equal.
Indeed, the corresponding symbolic systems differ by at most countable
number of points (some points could disappear because the whole lap of
some iterate of $f$ could disappear). However, this does not affect
nonatomic invariant measures, so by the variational principle the
topological entropies of both systems are the same (see, e.g. \cite{Wal}).

The discussion from this section can be summarized as a lemma.

\begin{lemma}\label{strict}
Any map from $\NSM$ of positive entropy is semiconjugate to a map from
$\M$ with the same entropy via a map from $\NDC_f$.
\end{lemma}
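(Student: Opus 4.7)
The plan is to formalize the construction laid out in the discussion preceding the statement; the lemma is really a clean restatement of what was just assembled, so the proof amounts to checking that each step of the construction behaves as claimed and that the entropy is preserved.

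First I would fix $f\in\NSM$ with $h(f)>0$, partition $I$ into its (suitably shortened) laps $J_1,\dots,J_s$, and introduce the coding map sending $x\in I$ to the sequence $(\eps_n(x))_{n\ge0}$ recording which lap contains $f^n(x)$. Define $x\sim y$ iff $\eps_n(x)=\eps_n(y)$ for all $n$. The first routine verification is that each equivalence class is a closed subinterval of $I$: each set $\{x:\eps_n(x)=k\}$ is (essentially) the lap $J_k$, which is closed, and $f^n$ is continuous and monotone on the intersection of the relevant preimages, so the equivalence classes are closed intervals, possibly degenerate.

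Next I would set $\widehat{I}=I/\!\sim$ with the order inherited from $I$, and let $\psi\colon I\to\widehat{I}$ be the quotient projection. Because $\sim$-classes are closed intervals, $\widehat{I}$ is an interval and $\psi$ is a continuous increasing surjection, i.e.\ $\psi\in\NDC$ (after identifying $\widehat{I}$ with $[0,1]$ via an affine homeomorphism). Since $x\sim y$ implies $f(x)\sim f(y)$ (the shifted code is the same), $f$ descends to a well-defined map $\widehat f\colon\widehat I\to\widehat I$ with $\psi\circ f=\widehat f\circ\psi$. Lemma~\ref{E4a.0} then gives $\widehat f\in\NSM$ and bounds its number of laps. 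To check $f$-compatibility, observe that if $\psi$ collapses a whole lap $J$ to a point, then all points of $J$ share the same code, so their images share the same code from time $0$ onward; hence $f(J)$ lies in a single $\sim$-class and is mapped by $\psi$ to a point. Thus $\psi\in\NDC_f$.

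I would then verify $\widehat f\in\M$: if $\widehat f$ were constant on some nondegenerate interval $L\subset\widehat I$, then $\psi^{-1}(L)$ would be a nondegenerate interval on which the codes from time $1$ onward coincide; but different points of $\psi^{-1}(L)$ have different codes at time $0$ only if they lie in different laps, and then $L$ would have to be a single point (since $\psi$ identifies points within each lap only when their full codes agree). This contradicts nondegeneracy of $L$.

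Finally, I would prove $h(\widehat f)=h(f)$. By~\eqref{cn} both entropies are computed from lap counts, and both are equal to the topological entropy of the subshift $\Sigma_f$ (respectively $\Sigma_{\widehat f}$). These two subshifts differ only on a countable set of orbits (the boundary sequences of collapsed intervals), so they share the same nonatomic invariant measures; by the variational principle their topological entropies coincide. The only subtle step here is the last one: ruling out that countably many exceptional orbits could support all the entropy. I expect this to be the main, though standard, obstacle, and it is exactly the variational-principle argument already sketched in the preceding paragraph.
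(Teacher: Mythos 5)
Your proposal is correct and follows essentially the same route as the paper: the coding by laps, the quotient by the code-equivalence relation, the use of Lemma~\ref{E4a.0} to get $\widehat f\in\NSM$, the constancy argument for $\widehat f\in\M$, and the variational-principle comparison of the two subshifts (which differ on at most countably many points) to preserve entropy. The only detail worth making explicit is that positive entropy forces $\Sigma_f$ to be uncountable, so $\widehat I$ is nondegenerate and $\psi$ really lands in $\NDC$.
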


Note that the assumption about positive entropy is necessary because
otherwise it could happen that the interval $\widehat{I}$ is degenerate to a
point. This is for instance the case if $f$ is the identity map.

Note that if we construct a semiconjugacy in two steps, first
semiconjugating a map $f\in\NSM$ to $h\in\M$, and then $h$ to a
map of constant slope, we have to know that the resulting
semiconjugacy is $f$-compatible. However, it is easy to see that if
the first semiconjugacy is $f$-compatible and the second one is
$h$-compatible, then their composition is $f$-compatible.

\section{Reduction to semiconjugacies}\label{reduction}

When we want to prove that a map from $\M$ of positive entropy is
semiconjugate to a map with constant slope, it is easier to look not
for this map, but for a semiconjugacy.

Denote the class of all piecewise continuous piecewise monotone maps
from $[0,1]$ into itself with constant slope $\beta$ by $\CS_{\beta}$.
Observe that they are piecewise strictly monotone.

\begin{remark}\label{entconstslope}
By \cite{MS} we have $h(g) = \max\{\log\beta, 0\}$ for very $g
\in \CS_{\beta}.$
\end{remark}

Suppose that $\psi\in \NDC$ semiconjugates $f\in \M$ with
$g \in \CS_{\beta}$ for some $\beta$.
In view of Lemma~\ref{E4a.0}, we have then
\begin{equation}\label{eq4a.1}
\abs{\psi(f(y)) - \psi(f(x))} = \beta\abs{\psi(y) - \psi(x)}
\end{equation}
for all $x,y \in I$ such that $f$ is continuous and monotone on
$\chull{x,y}$. We shall show that the converse is also true.

\begin{lemma}\label{E4a.2}
If $f\in \M,$ $\psi\in \NDC,$ $\beta >1$ and \eqref{eq4a.1} holds for
all $x,y\in I$ such that $f$ is continuous and monotone on
$\chull{x,y},$ then there exists (a unique) $g\in \CS_{\beta}$
such that $\psi$ semiconjugates $f$ with $g$.
\end{lemma}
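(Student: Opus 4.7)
The plan is to construct $g$ piece by piece, defining it on the image under $\psi$ of each lap of $f$. Let $J_1,\dots,J_s$ be the laps of $f$, and set $\tilde{J}_i := \psi(J_i)$, a closed subinterval of $[0,1]$ (possibly degenerate). On each $\tilde{J}_i$ I define $g$ by the rule $g(\psi(x)) := \psi(f(x))$ for $x \in J_i$. To see this is well defined on $\tilde{J}_i$, suppose $x, x' \in J_i$ with $\psi(x)=\psi(x')$. Since $f$ is continuous and monotone on the whole lap $J_i \supseteq \chull{x,x'}$, hypothesis \eqref{eq4a.1} gives
\[
\abs{\psi(f(x)) - \psi(f(x'))} = \beta\abs{\psi(x)-\psi(x')} = 0,
\]
so the value of $\psi \circ f$ is constant on $\psi$-fibers within each lap.

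Next I verify the required properties of $g|_{\tilde{J}_i}$. For $t, t' \in \tilde{J}_i$, choose preimages $x, x'\in J_i$; then \eqref{eq4a.1} yields $\abs{g(t)-g(t')} = \beta\abs{t-t'}$, so $g$ is affine with slope $\beta$ on $\tilde{J}_i$. Monotonicity on $\tilde{J}_i$ follows exactly as in the proof of Lemma~\ref{E4a.0}: $g$ is increasing on $\tilde{J}_i$ if $f\evalat{J_i}$ is increasing and decreasing if $f\evalat{J_i}$ is decreasing. Since $\bigcup_i J_i = I$ and $\psi$ is surjective, $\bigcup_i \tilde{J}_i = [0,1]$, so $g$ is defined everywhere on $[0,1]$. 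Two different $\tilde{J}_i$ can overlap only in a single point (the $\psi$-image of a common endpoint of two laps, or a point at which an intervening lap has been collapsed by $\psi$), and at such a point $g$ may carry more than one value — exactly the same multi-valuedness at lap endpoints that the paper already allows for $f$. Thus $g$ is piecewise continuous, piecewise strictly monotone (since $\beta>1$), has constant slope $\beta$, and has at most $s$ laps, so $g \in \CS_\beta$. The identity $g\circ\psi = \psi\circ f$ holds by construction. Uniqueness is immediate: any $g'$ satisfying $g'\circ\psi = \psi\circ f$ must agree with $g$ on every point of $\psi(I) = [0,1]$.

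There is no genuine obstacle; the only delicate bookkeeping is handling lap endpoints carefully, and working lap by lap sidesteps it cleanly: because \eqref{eq4a.1} is only assumed when $f$ is continuous and monotone on $\chull{x,y}$, I cannot a priori compare $\psi(f(x))$ and $\psi(f(y))$ for $\psi(x)=\psi(y)$ lying in different laps, but the piecewise-continuous convention makes this comparison unnecessary. The argument uses the hypothesis $\beta>1$ only implicitly (to conclude that $g$ is piecewise strictly monotone, so that $g\in\CS_\beta$); the construction itself produces an affine-with-slope-$\beta$ map on each $\tilde{J}_i$ for any $\beta$.
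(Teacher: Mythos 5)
Your proposal is correct and follows essentially the same route as the paper: define $g$ lap by lap via $g(\psi(x))=\psi(f(x))$, use \eqref{eq4a.1} on $\psi$-fibers within a single lap to get well-definedness, use it again to get constant slope $\beta$ on each nondegenerate $\psi(J)$, and deduce uniqueness from surjectivity of $\psi$. The only cosmetic difference is that the paper phrases the definition in terms of the image $f((\psi\evalat{J})^{-1}(z))$ being contained in a single fiber $\psi^{-1}(w)$, which is the same observation.
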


\begin{proof}
Let us define $g$. We do it separately for every lap $J$ of $f$. If
$\psi$ is constant on $J$, then we ignore $J$.
Assume that $\psi(J)$ is a nondegenerate interval.
If $z\in\psi(J)$ then $(\psi\evalat{J})^{-1}(z)$ is a closed interval
(perhaps degenerated to a point). By~\eqref{eq4a.1} $\psi$ is constant
on $f((\psi\evalat{J})^{-1}(z))$.
This means that
$f((\psi\evalat{J})^{-1}(z)) \subset \psi^{-1}(w)$
for some $w \in [0,1]$.
Then we set $g(z) = w$.
For every $v \in(\psi\evalat{J})^{-1}(z)$ we have
$\psi(f(v)) = w = g(\psi(v))$.
This shows that with our definition of $g$ we get
$\psi\circ f = g\circ\psi$ on $J$.
This is true for every lap $J$ of $f$ which is not mapped by $\psi$ to
a point, so $\psi$ semiconjugates $f$ with $g$.

Clearly, if we want to have $\psi(f(v)) = g(\psi(v))$ then with the
construction described above, there is only one choice for $g(z)$
(except for the points of discontinuity, where we agreed that the map
can have two values). This proves the uniqueness of $g$.

It remains to prove that $g\in \CS_{\beta}$. From Lemma~\ref{E4a.0}
and~\eqref{eq4a.1} it follows that the map $g$ belongs to $\NSM$ and
if $f$ is continuous and monotone on $\chull{x,y}$ then
\[
\abs{g(\psi(y)) - g(\psi(x))} =
  \abs{\psi(f(y)) - \psi(f(x))} =
  \beta\abs{\psi(y) - \psi(x)}.
\]
This shows that $g$ has constant slope $\beta$ on each nondegenerate
image under $\psi$ of a lap of $f$. Since $I$ can be divided into
finitely many laps of $f$, we get $g\in\CS_{\beta}$.
\end{proof}

\section{Markov maps}\label{sec-markov}

Now we consider a special class of maps. We call $f\in \NSM$ a
\emph{Markov map\/} if there exists a finite $f$-invariant set $P$
containing the endpoints of $I$, such that on each (closed) interval
of the partition $\A$ of $I$ by the points of $P$ the map $f$ is either
strictly monotone and continuous or constant. We call $\A$ a
\emph{Markov partition} for $f$. We denote the class of all Markov
maps $\map{f}{I}$ by $\Markov$.

\begin{lemma}\label{E4a.3}
If $f\in \Markov$ and $h(f) = \log\beta > 0$ then $f$ is semiconjugate
to some map $g\in \CS_{\beta}$ via an increasing $f$-compatible map.
\end{lemma}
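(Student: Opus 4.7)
The plan is to build $\psi$ directly from a non-negative right eigenvector of the Markov transition matrix and then invoke Lemma~\ref{E4a.2} to produce $g$. First, I would label the intervals of $\A$ from left to right as $A_1,\dots,A_n$ and form the transition matrix $M\in\{0,1\}^{n\times n}$ with $M_{ij}=1$ iff $f(A_i)\supseteq A_j$; the rows corresponding to constant intervals are automatically zero, since then $f(A_i)$ is a single point and so cannot contain any non-degenerate $A_j$. The standard lap-counting identity for Markov maps gives $h(f)=\log\rho(M)$, hence $\rho(M)=\beta$, and Perron--Frobenius supplies a nonzero non-negative right eigenvector $\ell=(\ell_1,\dots,\ell_n)^{\top}$ with $M\ell=\beta\ell$, which I normalize so that $\sum_i\ell_i=1$.

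Next, I would build $\psi\in\NDC$ as follows. Writing the Markov partition as $P=\{p_0<p_1<\dots<p_n\}$, set $\psi(p_i):=\ell_1+\cdots+\ell_i$. On every $A_i$ with $\ell_i>0$ (necessarily a strictly monotone interval, because constant rows of $M$ force $\ell_i=0$), impose the functional equation
\[
\psi\evalat{A_i} \;=\; \beta^{-1}(\psi\circ f)\evalat{A_i} + c_i,
\]
with $c_i$ chosen so that $\psi$ takes the prescribed value at $p_{i-1}$; the corresponding compatibility at $p_i$ reduces exactly to $\beta\ell_i=\sum_j M_{ij}\ell_j$, which holds by construction. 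Iterating along the refinements $\A^{(n)}=\bigvee_{k=0}^{n-1}f^{-k}\A$ extends $\psi$ to the endpoints of $\A^{(n)}$; because the equation contracts by $1/\beta<1$ at each step, the $\psi$-variation on each element of $\A^{(n)}$ is at most $\beta^{-n}$, so the assigned values assemble into an increasing continuous function. On each $A_i$ with $\ell_i=0$ collapse $A_i$ to the point $\psi(p_{i-1})=\psi(p_i)$. The resulting map $\psi\colon I\to[0,1]$ is an increasing continuous surjection.

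By construction $\psi$ satisfies~\eqref{eq4a.1}: for $x,y$ in the same lap of $f$ inside a strictly monotone $A_i$ with $\ell_i>0$ this is precisely the functional equation (the absolute values absorb the orientation of $f\evalat{A_i}$), while for a lap inside some $A_i$ with $\ell_i=0$ both sides vanish because $\psi$ is constant on $A_i$. The same case analysis yields $f$-compatibility: if $f$ is strictly monotone on $A_i$ and $\ell_i=0$, the eigenvector equation forces $\ell_j=0$ for every $A_j\subseteq f(A_i)$, so $\psi$ is also constant on $f(A_i)$; if $f$ is constant on $A_i$, then $f(A_i)$ is a single point and the compatibility is trivial. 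Lemma~\ref{E4a.2} then delivers a unique $g\in\CS_\beta$ with $\psi\circ f=g\circ\psi$.

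The main obstacle is verifying that the recursive definition actually produces a genuine continuous function on $I$: one must check that the values assigned to the endpoints of $\A^{(n)}$ remain monotone and mutually compatible as $n$ grows, and that no jump discontinuities accumulate in the limit. The contraction factor $1/\beta<1$ (from the hypothesis $\beta>1$) handles both issues simultaneously---it shrinks the $\psi$-variation on each element of $\A^{(n)}$ to zero, and it enforces the boundary compatibility via repeated application of the eigenvector equation $M\ell=\beta\ell$ along the iterated transitions.
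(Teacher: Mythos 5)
Your proposal is correct and follows essentially the same route as the paper: the same transition matrix and Perron--Frobenius eigenvector, values of $\psi$ assigned at the endpoints of the iterated refinements $\bigvee_{k}f^{-k}\A$, the $\beta^{-n}$ contraction giving both consistency and continuity of the limit, and Lemma~\ref{E4a.2} to produce $g$ --- indeed your recursion $\psi=\beta^{-1}(\psi\circ f)+c_i$, iterated $n$ times, is exactly the paper's explicit formula $\psi(x)=\beta^{-n}\sum_{A\in\A_n,\,A\le x}v_{_{f^n(A)}}$. The only cosmetic point is that on a decreasing lap the functional equation needs a minus sign in front of $\beta^{-1}$ to keep $\psi$ increasing, which your remark about the absolute values absorbing the orientation already implicitly acknowledges.
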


\begin{proof}
By Lemma~\ref{E4a.2}, it is enough to prove that there exists
$\psi\in\NDC_{f}$ such that \eqref{eq4a.1} holds for all $x,y \in I$ such
that $f$ is continuous and monotone on $\chull{x,y}$.

Let $\A$ be the Markov partition for $f$ by the points of $P$. Let
$\A_n$ be the common refinement of $f^{-i}(\A)$, $i=0,1,\dots,n,$
that is
\[
\A_n = \set{A_0 \cap A_1 \cap \dots \cap A_n}{
    A_0 \in \A,
    A_1 \in f^{-1}(\A),
    \dots
    A_n \in f^{-n}(\A)
}.
\]
It is a Markov partition for $f^i$, $i=1,\dots,n+1$.
Let $P_n$ be the set of all endpoints of elements of $\A_n.$

If $A\in\A_n$ then $f(A)$ is either a singleton or an element of
$\A_{n-1}$. Therefore,
\begin{equation}\label{eq4a.2}
f(P_{n+1}) \subset P_n\subset P_{n+1}
\end{equation}
for every $n \ge 0$ and, by induction, $f^n(A)$ is a singleton or an
element of $\A_0=\A$.

Let $M=(m_{_{AB}})_{A,B\in\A}$ be the matrix given by $m_{_{AB}}=1$ if
$f(A)\supset B$ and $m_{_{AB}}=0$ otherwise.
The sum of all entries of $M^n$ is equal to the cardinality of $\A_n$.
Therefore, by Theorem~2 of~\cite{MZ}, the spectral radius of $M$ is
$\beta$ (since the matrix $M$ is nonnegative, its spectral radius is equal to
the limit of the $n$-th root of the sum of all entries of $M^n$).
By the Perron-Frobenius Theorem, $M$ has a right eigenvector
$v=(v_{_A})_{A\in\A}$ associated to the eigenvalue $\beta$, with
non-negative coordinates. We can normalize $v$ in such a way that
\begin{equation}\label{eq4a.3}
  \sum_{A\in\A} v_{_A} = 1.
\end{equation}
For every $A\in\A$ we have
\begin{equation}\label{eq4a.4}
   \sum_{B\in\A} m_{_{AB}} v_{_B} = \beta v_{_A}.
\end{equation}

Now we define $\psi$ on $Q = \bigcup_{n=0}^\infty P_n$. For a set $A$
and a point $x$ we shall write $A\le x$ if $y \le x$ for all $y\in A$.
For each $x\in P_n$ we set
\[
\psi(x) = \beta^{-n} \sum_{A\in\A_n,\, A\le x} v_{_{f^n(A)}}.
\]
In the above formula we omit those $A\in\A_n$ for which $f^n(A)$ is a
singleton. Since $P_n\subset P_{n+1}$, in order to be sure that $\psi$
is well defined, we have to check that if $x\in P_n$ then
\begin{equation}\label{eq4a.5}
\beta^{-n} \sum_{A\in\A_n,\, A\le x} v_{_{f^n(A)}} =
   \beta^{-n-1} \sum_{B\in\A_{n+1},\, B\le x} v_{_{f^{n+1}(B)}}.
\end{equation}
The partition $\A_{n+1}$ is finer than $\A_n$, so~\eqref{eq4a.5} will
follow if we can prove that for each $A\in\A_n$,
\[
v_{_{f^n(A)}} =
   \beta^{-1} \sum_{B\in\A_{n+1},\, B\subset A} v_{_{f^{n+1}(B)}}.
\]
Since $f^n$ maps each $B$ as above in a continuous and monotone way
onto an element of $\A_1$, it is enough to show that for each
$C\in\A$,
\begin{equation}\label{eq4a.6}
v_{_C} = \beta^{-1} \sum_{D\in\A_1,\, D\subset C} v_{_{f(D)}}.
\end{equation}
However, the sets $f(D)$, where $D\in\A_1$ and $D\subset C$, are
exactly those elements $E$ of $\A$ for which $m_{_{CE}} = 1$.
Hence, we can rewrite \eqref{eq4a.6} as
\[
v_{_C} = \beta^{-1} \sum_{E\in\A} m_{_{CE}} v_{_E},
\]
which holds by \eqref{eq4a.4}. This completes the proof that $\psi$ is
well defined on $Q$.

By the definition, $\psi$ is increasing on $Q.$
On the other hand, the endpoints of $I$ belong to $P = P_0$ and
we have $\A_0 = \A$.
Thus, the left endpoint of $I$ is mapped by $\psi$ to 0 and, by
\eqref{eq4a.3}, the right endpoint of $I$ is mapped by $\psi$ to 1.
Notice that
\[
\sup_{\set{x,y}{\chull{x,y}\in\A_n}}
  \abs{\psi(y) - \psi(x)} \le \beta^{-n},
\]
since by \eqref{eq4a.3} all $v_{_A}$ are bounded by 1. From this it
follows that if $x\in Q$ then
$\psi(x) = \sup\set{\psi(y)}{y < x ,\ y\in Q}$
(unless $x$ is the left endpoint of $I$) and
$\psi(x) = \inf\set{\psi(y)}{y > x,\ y\in Q}$
(unless $x$ is the right endpoint of $I$), and if $x\in I\setminus Q$
then
\[
\sup\set{\psi(y)}{y<x,\ y \in Q} = \inf\set{\psi(y)}{y>x,\ y \in Q}.
\]
Consequently, $\psi$ can be extended to a map from $\NDC$, constant on
the components of $I\setminus Q$.

Now we are ready to show that \eqref{eq4a.1} holds. Let $n\ge 1$ and
let $\chull{x,y}\in \A_n$. Then $f(\chull{x,y}) \in \A_{n-1}$ and by
the definition of $\psi$, \eqref{eq4a.1} holds for
such $x,y$. By taking sums and limits, we obtain \eqref{eq4a.1} for
every $x,y\in \Cl(Q)$ such that $f$ is monotone on $\chull{x,y}$.

Let $z<w$ and $Q\cap (z,w) =\emptyset$. Then for every $n>0$ the
interval $[z,w]$ is contained in some element of $\A_n$, and thus
$f([z,w])$ is contained in some element of $\A_{n-1}$. Therefore
$Q\cap f((z,w)) = \emptyset$.

We already know that formula \eqref{eq4a.1} holds for every
$x,y\in\Cl(Q)$ with $f$ continuous and monotone on $\chull{x,y}$.
Moreover, if $z<w$ and $Q\cap (z,w) = \emptyset$ then $Q\cap f((z,w))
= \emptyset$ and then $\psi(z) = \psi(w)$ and $\psi(f(z)) =
\psi(f(w))$. This proves that \eqref{eq4a.1} holds for all $x,y\in I$
with $f$ continuous and monotone on $\chull{x,y}.$
Moreover, from \eqref{eq4a.1} it follows that $\psi$ is $f$-compatible.
This completes the proof.
\end{proof}

\section{Main theorem}\label{sec-main}

To be able to use Lemma~\ref{E4a.3}, we have to prove the density of
Markov maps in $\M$. The space $\M\cup\Markov$ with the topology of
uniform convergence is a metric space with the natural metric
\begin{equation}\label{dist}
\dist(f,g) = \sup_{x\in I} \abs{f(x) - g(x)}.
\end{equation}

\begin{lemma}\label{E4a.4}
Let $f\in \M$ and $n\in\N$. Then there exists $g\in \Markov$ such that
$\dist(f,g)<1/n$ and for every $k\le n$ the map $g^k$ is continuous
and monotone on every lap of $f^k$.
\end{lemma}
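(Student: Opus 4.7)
The plan is to perturb $f$ to a Markov map $g$ by rounding the values of $f$ at the endpoints of a sufficiently fine finite partition $P$ of $I$ to points of $P$ itself, and then extending $g$ monotonically between consecutive points of $P$. Let $E_f$ denote the (finite) set of breakpoints of $f$, namely the endpoints of its laps together with the endpoints of $I$. For each $k\ge 0$ the set $C_k:=\bigcup_{j=0}^{k-1}f^{-j}(E_f)$ is finite and is exactly the set of endpoints of the laps of $f^k$; write $C:=C_n$.

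Fix $\eta>0$ (to be chosen small compared to $1/n$) and pick a finite $P\supset C\cup f(E_f)$ of mesh less than $\eta$ such that on every element of the partition of $I$ by $P$ the map $f$ varies by less than $\eta$; uniform continuity of $f$ on each of its finitely many laps makes this possible. For $p\in P$ let $g(p)$ be a point of $P$ closest to $f(p)$ (breaking ties arbitrarily), so that $g(P)\subset P$ and $|g(p)-f(p)|\le\eta/2$; then extend $g$ affinely between consecutive points of $P$ lying in a single lap of $f$, preserving the one-sided discontinuities of $f$ at the points of $E_f$. Rounding to the nearest element of a linearly ordered finite set is order preserving, so $g$ is monotone on each lap of $f$ in the same direction as $f$; since $g(P)\subset P$ and $g$ is strictly monotone or constant on each element of the $P$-partition, the set $P$ is a Markov partition for $g$ and $g\in\Markov$. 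A triangle inequality combining $|g(p)-f(p)|\le\eta/2$ with the $\eta$-variation of $f$ on each partition element gives $|g(x)-f(x)|$ bounded by a constant multiple of $\eta$, and hence $\dist(f,g)<1/n$ for $\eta$ chosen small enough.

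The decisive observation is that $g$ coincides with $f$ on $C$: indeed $f(C)\subset C_{n-1}\cup f(E_f)\subset P$, so for every $c\in C$ the value $f(c)$ already lies in $P$, and therefore the rounding leaves it unchanged. Granting this, I argue by induction on $k=1,\dots,n$ that $g^k$ is continuous and monotone on every lap $J$ of $f^k$. The base case $k=1$ is immediate from the construction. For the inductive step, $J$ is contained in a single lap of $f$, so $g|_J$ is continuous and monotone in the same direction as $f|_J$; the endpoints of $J$ belong to $C_k\subset C$, where $g=f$, and since $g|_J$ and $f|_J$ are monotone in the same direction with matching endpoint values, $g(J)=f(J)$, which is itself a lap of $f^{k-1}$. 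The induction hypothesis applied to $g(J)=f(J)$ together with composition then yields the required property for $g^k$ on $J$. The main obstacle is precisely reconciling the Markov condition $g(P)\subset P$ with the identity $g|_C=f|_C$; the decisive move is including $f(E_f)$ in $P$, which forces $f(C)\subset P$ and renders the rounding innocuous on $C$.
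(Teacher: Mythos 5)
Your proposal is correct and follows essentially the same route as the paper: take a fine finite set $P$ adapted to the lap structure of $f^n$, snap the values of $f$ at the points of $P$ into $P$, interpolate affinely on each lap, and then verify the Markov property, the distance bound, and the behaviour of the iterates. The only real variation is in the last step --- the paper puts the forward images $f^i(Q_k)$ for $i\le k\le n$ into $P$ and tracks $g^i(a)=f^i(a)$ along the orbits of the endpoints, whereas you put only $C_n\cup f(E_f)$ into $P$ and run a one-step induction via $g(J)=f(J)$; both work, noting only that $f(J)$ is in general merely \emph{contained in} a lap of $f^{k-1}$ (which is all your induction hypothesis needs), and that your ``arbitrary'' tie-breaking in the rounding is harmless only because $f\in\M$ is strictly monotone, hence injective, on each lap.
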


\begin{proof}
Let $Q_k$ be the set of all endpoints of all laps of $f^k$. The set
$Q_k$ is finite and the endpoints of $I$ belong to $Q_k$.

Since $f$ is continuous on each lap of $f$ and each lap of $f$ is
compact, $f$ is uniformly continuous on each lap. Therefore there
exists $\delta$ such that $0 < \delta < \tfrac{1}{2n}$ and if $x,y$
belong to the same lap of $f$ and $\abs{x-y} < \delta$ then
$\abs{f(x) - f(y)} < \tfrac{1}{2n}$.
Take a finite set $P$ containing
$\bigcup_{k=0}^n \bigcup_{i=0}^k f^i(Q_k)$
such that the distance between any two adjacent points of $P$ is
smaller than $\delta$.
For each $x\in P$ set
$g(x) := \max \set{y\in P}{y \le f(x)}$
and extend $g$ affinely between the points of $P$. At the points of
discontinuity of $f$ (they belong to $P$) we take one-sided limits, so
effectively we work separately on each maximal interval of continuity
of $f$. Clearly, $g \in \Markov$. Moreover, $g$ defined in this way is
automatically continuous on each lap of $f$.

Suppose that $x\le y \le z,$ $x,z\in P$ and there are no points of $P$
between $x$ and $z$. We have
$\abs{x-y}\le \abs{x-z} < \delta$,
so
$\abs{f(x) - f(y)} < \tfrac{1}{2n}$.
Moreover, by the definition of $g$ we have
$\abs{f(x) - g(x)} < \delta < \tfrac{1}{2n}$,
and hence
$\abs{f(y) - g(x)} < 1/n$.
Analogously, $\abs{f(y) - g(z)} < 1/n$.
Since $g$ is affine on $[x,z]$, we get
$\abs{f(y) - g(y)} < 1/n$.
This proves that $\dist(f,g) < 1/n$.

Let $k \le n$ and let $J = [a,b]$ be a lap of $f^k$. Then $a,b\in Q_k$
and hence $f^i(a), f^i(b)\in P$ for $i=0,1,\dots,k$.
{}From the definition of $g$, by induction we get
$g^i(a) = f^i(a)$ and $g^i(b) = f^i(b)$ for $i=0,1,\dots,k$.
Look at $g$ on $L_i = \chull{g^i(a),g^i(b)},$ $0 \le i < k$.
Since $L_i = \chull{f^i(a),f^i(b)}$, the map $f$ is monotone on $L_i$.
Therefore $f$ is monotone on $L_i\cap P$. Notice that if $x,y\in P$
and $f(x) \le f(y)$ (respectively $f(x)\ge f(y)$) then also $g(x)\le
g(y)$ (respectively $g(x) \ge g(y)$). Hence, $g$ is monotone on
$L_i\cap P$. The set $L_i\cap P$ contains the endpoints of $L_i$ and
hence $g$ is monotone on $L_i = g^i(J)$.
\end{proof}

\begin{MainTheorem}\label{E4a.5}
If $f\in \NSM$ and $h(f) = \log \beta > 0$ then $f$ is semiconjugate
to some map $g \in \CS_{\beta}$ via a map from $\NDC_f$.
\end{MainTheorem}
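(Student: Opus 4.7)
The plan is to reduce to $f\in\M$ via Lemma~\ref{strict}, approximate $f$ by Markov maps, invoke Lemma~\ref{E4a.3} for each approximant, and pass to a limit. The reduction is immediate: if $f\in\NSM$, Lemma~\ref{strict} supplies $\widehat f\in\M$ with $h(\widehat f)=h(f)$ and an $f$-compatible semiconjugacy from $f$ to $\widehat f$; applying the theorem to $\widehat f$ and composing the two semiconjugacies yields the result, since the composition remains $f$-compatible by the observation at the end of Section~\ref{sec-nsmm}.

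Assume now $f\in\M$ with $h(f)=\log\beta>0$. By Lemma~\ref{E4a.4}, for each $n$ choose $g_n\in\Markov$ with $\dist(f,g_n)<1/n$ such that $g_n^k$ is continuous and monotone on every lap of $f^k$ for $k\le n$. Set $\beta_n:=\exp(h(g_n))$; for large $n$ we have $\beta_n>1$, so Lemma~\ref{E4a.3} provides $\psi_n\in\NDC_{g_n}$ and $q_n\in\CS_{\beta_n}$ with $\psi_n\circ g_n=q_n\circ\psi_n$. The next step is to show $\beta_n\to\beta$. The hypothesis on $g_n$ gives $c_k(g_n)\le c_k(f)$ for $k\le n$, and the general submultiplicativity $c_{jk}(g_n)\le c_k(g_n)^j$ (coming from $g_n^{jk}=g_n^k\circ g_n^{(j-1)k}$ and the fact that $g_n^{(j-1)k}$ is monotone on each of its laps) yields $\beta_n=\lim_j c_{jk}(g_n)^{1/(jk)}\le c_k(f)^{1/k}$ for every $k\le n$; letting $k\to\infty$ gives $\limsup_n\beta_n\le\beta$. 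Conversely, the identities $g_n^i(a)=f^i(a)$ for $i\le k\le n$ and endpoints $a$ of laps of $f^k$ (established in the proof of Lemma~\ref{E4a.4}) transfer every $s$-horseshoe of $f^k$ to an $s$-horseshoe of $g_n^k$; combined with the horseshoe lower bound recalled in Section~\ref{sec-pcont}, this gives $\liminf_n\beta_n\ge\beta$.

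Each $\psi_n$ is increasing with values in $[0,1]$, so Helly's selection theorem yields a subsequence (still denoted $\psi_n$) converging pointwise to an increasing $\psi:I\to[0,1]$ with $\psi(\min I)=0$ and $\psi(\max I)=1$. If $x<y$ lie in $I$ and $f$ is continuous and monotone on $\chull{x,y}$, then $\chull{x,y}$ lies in a single lap of $f$ and hence, for all large $n$, also in a lap of $g_n$; the slope identity~\eqref{eq4a.1} applied to $\psi_n$ and $g_n$ reads
\[
\abs{\psi_n(g_n(y))-\psi_n(g_n(x))} = \beta_n\abs{\psi_n(y)-\psi_n(x)}.
\]
Using $g_n\to f$ uniformly and $\beta_n\to\beta$, I can let $n\to\infty$ at any continuity points of $\psi$ for which $f(x),f(y)$ are also continuity points of $\psi$; by monotonicity the resulting equation~\eqref{eq4a.1} extends to every admissible pair $(x,y)$.

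The main obstacle is that the pointwise limit $\psi$ need not be continuous and therefore may not lie in $\NDC$. I plan to handle this by the standard procedure of collapsing the countably many gaps of $\psi(I)\subset[0,1]$: a suitable non-decreasing surjection $\rho:[0,1]\to[0,1]$, affine with the appropriate common slope on the complement of the gaps and constant on each gap, produces $\rho\circ\psi\in\NDC$ for which~\eqref{eq4a.1} still holds with the same constant $\beta$. The $f$-compatibility of $\rho\circ\psi$ is then automatic: if it is constant on a lap $J$ of $f$, then~\eqref{eq4a.1} forces it to be constant on $f(J)$ as well. Finally, Lemma~\ref{E4a.2} produces the desired $g\in\CS_\beta$.
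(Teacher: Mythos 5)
Your overall strategy --- reduce to $\M$ by Lemma~\ref{strict}, approximate by Markov maps via Lemma~\ref{E4a.4}, apply Lemma~\ref{E4a.3} to each approximant, pass to a limit, and finish with Lemma~\ref{E4a.2} --- is the paper's strategy, and your direct proof that $\beta_n\to\beta$ is an acceptable variant of the paper's route (which combines the lower semicontinuity of entropy from Theorem~3 of \cite{MZ} with the final comparison $h(f)\ge h(g)=\log\gamma$). The proof diverges, however, at the compactness step, and that is where there is a genuine gap. The paper's central technical point is that the family $\{\psi_i\}$ is \emph{equicontinuous}: with $\alpha=\inf_i\beta_i>1$, if $\chull{x,y}$ meets at most two laps of $f^k$, then \eqref{eq4a.1} applied to the $k$-th iterates forces $\abs{\psi_i(x)-\psi_i(y)}\le 2\alpha^{-k}$, and Arzel\`a--Ascoli then yields a \emph{continuous} limit directly. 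You replace this with Helly's theorem, which only gives a monotone, possibly discontinuous limit, and you defer continuity to a final ``gap-collapsing'' step; that step is exactly where the proof is incomplete.

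Two things in the repair are unjustified. First, the claim that \eqref{eq4a.1} ``still holds with the same constant $\beta$'' after composing with $\rho$ requires that for every admissible pair $x<y$ the total length of the gaps of $\psi$ inside $(\psi(x),\psi(y))$ be exactly $\beta^{-1}$ times the total gap length inside $\chull{\psi(f(x)),\psi(f(y))}$ --- that is, that the atomic part of the Stieltjes measure $d\psi$ also transforms with factor $\beta$ under each branch of $f$, with correct bookkeeping of the partial gaps when $x$ or $y$ is itself a jump point. This can be extracted from \eqref{eq4a.1} by a limiting argument, but it is a real argument, not a citation to a standard procedure. Second, and more seriously, nothing you prove excludes the degenerate case in which the jumps of $\psi$ have total length $1$ (a pure jump limit); then the complement of the gaps has measure zero, $\rho$ cannot be an affine rescaling, and $\rho\circ\psi\notin\NDC$. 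The estimate that rules this out --- the oscillation of $\psi_n$ over any lap of $f^k$ is at most $\alpha^{-k}$, uniformly in large $n$ --- is precisely the equicontinuity bound you skipped, and once you have it the Helly limit is already continuous and the entire repair is unnecessary. (A smaller issue: your lower bound $\liminf_n\beta_n\ge\beta$ by ``transferring horseshoes'' tacitly assumes the horseshoe intervals of $f^k$ can be chosen with endpoints in the set where $g_n^i$ agrees with $f^i$; this needs an argument, whereas the paper simply invokes Theorem~3 of \cite{MZ}.)
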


\begin{proof}
By Lemma~\ref{strict}, $f$ is semiconjugate to some map $\widehat{f}\in\M$
with the same entropy via a map from $\NDC_f$. Therefore we may assume
for the rest of the proof that $f\in\M$.

By Lemma~\ref{E4a.4}, for each $n$ there exists $f_n\in \Markov$ such
that $\dist(f,f_n)<1/n$ and $f_n^k$ is continuous and monotone on
every lap of $f^k$ for every $k\le n$.
By Theorem~3 of~\cite{MZ}, we have $\liminf_{n\to\infty} h(f_n) \ge
h(f)$ (note that $\lim_{n\to\infty} f_n=f$ in the topology considered
in ~\cite{MZ}).
Therefore there exists a subsequence $(f_{n_i})_{i=1}^\infty$ of the
sequence $(f_n)_{n=1}^\infty$ such that for every $i$ we have
$h(f_{n_i}) = \log \beta_i > 0$,
the limit
$\gamma = \lim_{i\to\infty} \beta_i$ exists and $\log\gamma\ge h(f)$.
The number $\gamma$ is finite since
\[
\beta_i \le \Card(c_1(f_{n_i})) \le \Card(c_1(f))
\]
for all $i$ (recall that $c_1(f)$ denotes the number of laps of $f$).
Hence, by Lemma~\ref{E4a.3}, $f_{n_i}$ is semiconjugate to some
$g_i\in \CS_{\beta_i}$ via some map $\psi_i\in \NDC_{f_{n_i}}$.

Let $\eps > 0$.
Since $\lim_{i\to\infty} \log\beta_i \ge h(f) > 0$
we may assume that
$\alpha := \inf_i \beta_i > 1$.
Then there exists $k$ such that $2\alpha^{-k} < \eps$.
We can find $\delta' > 0$ such that if $\abs{x - y} < \delta'$ then
$\chull{x,y}$ is contained either in a lap of $f^k$ or in a union of
two laps of $f^k$.
In both cases there is $z\in \chull{x,y}$ such that the intervals
$\chull{x,z}$ and $\chull{z,y}$ are contained in laps of $f^k$.
There exists $i(\eps)$ such that if $i \ge i(\eps)$ then $n_i
\ge k$. Hence, for all $i\ge i(\eps)$ the maps $f_{n_i}^k$ are
continuous and monotone on these intervals.
On the other hand, since  $f_{n_i}$ is semiconjugate to a map
in $\CS_{\beta_i},$ it follows that $f_{n_i}^k$ is semiconjugate to a
map in $\CS_{\beta^k_i}.$ Thus, by~\eqref{eq4a.1},
\[
1 \ge
  \abs{\psi_i(f_{n_i}^k(x)) - \psi_i(f_{n_i}^k(z))} =
  \beta_i^k \abs{\psi_i(x) - \psi_i(z)} \ge
  \alpha^k \abs{\psi_i(x) - \psi_i(z)}.
\]
Therefore, $\abs{\psi_i(x) - \psi_i(z)} \le \alpha^{-k}$ and,
analogously, $\abs{\psi_i(z) - \psi_i(y)} \le \alpha^{-k}$.
Hence,
$\abs{\psi_i(x) - \psi_i(y)} \le 2\alpha^{-k} < \eps$.
In such a way we see that
for every $\eps > 0$ there exists $i(\eps)$ and $\delta' > 0$
such that if $i\ge i(\eps)$ and $\abs{x-y} < \delta'$
then
$\abs{\psi_i(x) - \psi_i(y)} < \eps$.
Since the maps $\psi_i$ for $i < i(\eps)$ are uniformly continuous,
we obtain that for every $\eps > 0$ there exists $\delta > 0$ such
that if $\abs{x-y} < \delta$ then
$\abs{\psi_i(x) - \psi_i(y)} < \eps$
for all $i$.
This means that the family of maps $\{\psi_i\}_{i=1}^\infty$ is
equicontinuous.
Clearly, these maps are commonly bounded (by 0 from below and by 1
from above). Therefore, by the Arzela-Ascoli Theorem, there is a
subsequence of the sequence $(\psi_i)_{i=1}^\infty$, convergent to
some continuous map $\psi$.

The map $\psi$ is increasing and it maps $I$ onto $[0,1]$.
If $f$ is continuous and monotone on some $\chull{x,y}$ then
$f_{n_i}$ is continuous and monotone on $\chull{x,y}$.
Consequently, again by~\eqref{eq4a.1},
$
\abs{\psi_i(f_{n_i}(x)) - \psi_i(f_{n_i}(y))} =
  \beta_i\abs{\psi_i(x) - \psi_i(y)}
$
for all $i$. By passing to the limit we get
$\abs{\psi(f(x)) - \psi(f(y))} = \gamma \abs{\psi(x) - \psi(y)}$.
Therefore $\psi$ is $f$-compatible and,
by Lemma~\ref{E4a.2}, it semiconjugates $f$ to some
$g\in \CS_\gamma$. We get $\log \beta = h(f) \ge h(g) = \log \gamma
\ge h(f)$, and thus $\gamma = \beta$.
\end{proof}

\section{Transitive maps}\label{sec-trans}

It is known that for transitive continuous interval maps the
semiconjugacies to maps of constant slope are actually conjugacies.
The same is true for piecewise continuous maps.

\begin{proposition}\label{E4a.6}
If $f\in\NSM$ is transitive and $\psi\in \NDC$ semiconjugates it with
a map $\map{g}{[0,1]},$ then $\psi$ is a homeomorphism (so it
conjugates $f$ with $g$).
\end{proposition}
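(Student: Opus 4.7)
The plan is to suppose $\psi$ is not injective and derive a contradiction from topological transitivity. Since $\psi\in\NDC$ is an increasing continuous surjection onto $[0,1]$, being a homeomorphism is equivalent to being injective, so I assume some fiber of $\psi$ contains a non-degenerate interval: there exist $a<b$ in $I$ and $c\in[0,1]$ with $\psi([a,b])=\{c\}$.

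The first step is to force the collapsed value $c$ to be $g$-periodic. I use transitivity in the form: for any two non-empty open $U,V\subset I$ there is $n\ge 1$ with $f^n(U)\cap V\ne\emptyset$. Applying this with $U=V=(a,b)$, I obtain some $n\ge 1$ and a point $q\in(a,b)$ with $f^n(q)\in(a,b)$. Both $q$ and $f^n(q)$ lie in $[a,b]\subset\psi^{-1}(c)$, so $\psi(q)=\psi(f^n(q))=c$; combining this with $\psi\circ f^n=g^n\circ\psi$ yields $c=\psi(f^n(q))=g^n(\psi(q))=g^n(c)$. Hence $c$ is $g$-periodic; let $p$ denote its period.

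Next I form a forward-invariant set out of this periodic fiber. Let $F:=\psi^{-1}(c)$, which is a non-degenerate closed interval (it contains $[a,b]$), and put $F':=\bigcup_{k=0}^{p-1}f^k(F)$. The relation $\psi\circ f^{p}=g^{p}\circ\psi$ together with $g^{p}(c)=c$ gives $f^{p}(F)\subset F$, so $F'$ is $f$-forward-invariant. Moreover, $F'$ has non-empty interior (containing that of $F$), and it is closed because $f\in\NSM$ is continuous on each of its finitely many laps, which makes each $f^k(F)$ a finite union of continuous images of compact subintervals of $F$.

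Finally, I apply transitivity a second time with $U=\operatorname{int}(F')$ and $V=I\setminus F'$. If $V\ne\emptyset$, both $U$ and $V$ are non-empty open, and transitivity produces some $n\ge 1$ with $f^n(U)\cap V\ne\emptyset$; but forward invariance forces $f^n(U)\subset F'$, so $f^n(U)\cap V\subset F'\cap(I\setminus F')=\emptyset$, a contradiction. Hence $F'=I$, which yields $\psi(I)=\psi(F')=\{g^k(c):0\le k<p\}$, a finite set, contradicting $\psi(I)=[0,1]$. Therefore $\psi$ must be injective, and being a continuous bijection from the compact interval $I$ onto $[0,1]$ it is a homeomorphism that conjugates $f$ with $g$. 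The most delicate ingredient is the closedness of $F'$: this is where I rely on $f\in\NSM$ having only finitely many laps, ensuring that each $f^k(F)$ is a finite union of compact pieces despite $f$'s possible discontinuities.
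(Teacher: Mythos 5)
Your argument is correct, and its first half coincides with the paper's: both proofs extract from a collapsed fiber $\psi^{-1}(c)$ a return of an orbit to that fiber and conclude that $c$ is a periodic point of $g$. You diverge in how the contradiction is then reached. The paper takes a point $x$ with dense $f$-trajectory (transitivity in the dense-orbit form), observes that its trajectory must enter $\psi^{-1}(z)$ twice, and then notes that the $g$-trajectory of $\psi(x)$ is simultaneously dense in $[0,1]$ (as the continuous image of a dense set under the surjection $\psi$) and finite (since it falls onto the periodic orbit of $z$) --- a two-line finish. You instead build the closed, forward-invariant set $F'=\bigcup_{k=0}^{p-1}f^k(F)$ with non-empty interior, invoke transitivity a second time (in the open-sets form) to force $F'=I$, and conclude that $\psi(I)$ is the finite set $\{g^k(c)\}_{k<p}$, contradicting surjectivity. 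Both routes are sound; yours costs an extra technical step (the closedness of $F'$, which you correctly justify via the finitely many laps of $f\in\NSM$, and which is genuinely needed so that $I\setminus F'$ is open), while the paper's costs the identification of transitivity with the existence of a dense orbit. Since the paper never fixes a formal definition of transitivity for piecewise continuous maps, you should at least remark that the open-sets formulation you use (with $n\ge 1$, which you need to get periodicity of $c$) is the one being assumed, or that it is equivalent to the dense-orbit formulation in this setting; this is the only bookkeeping point separating your write-up from a complete proof.
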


\begin{proof}
Suppose that $\psi$ is not a homeomorphism. Then there exists $z \in
[0,1]$ such that $\psi^{-1}(z)$ is a proper interval. By the
transitivity of $f$ there exists $x\in I$ with a dense trajectory. In
particular, this trajectory is dense in the interior of
$\psi^{-1}(z)$. Hence, there exist $n\ge 0$ and $k > 0$ such that
$f^n(x), f^{n+k}(x) \in\psi^{-1}(z)$. We have $g^k(z) =
g^k\circ\psi(f^n(x)) = \psi\circ f^k(f^n(x)) = \psi(f^{k+n}(x)) = z$,
so $z$ is a periodic point of $g$. However, since the $f$-trajectory
of $x$ is dense in $I$, the $g$-trajectory of $\psi(x)$ is dense in
$[0,1]$. On the other hand, the $g$-trajectory of $\psi(x)$
is finite, a contradiction.
\end{proof}

By Theorem~\ref{E4a.5} and Proposition~\ref{E4a.6}, we get the
following corollary.

\begin{MainCorollary}\label{E4a.7}
If $f\in \NSM$ is transitive and $h(f) = \log \beta > 0$ then $f$ is
conjugate to some map $g \in \CS_{\beta}$ via an increasing homeomorphism.
\end{MainCorollary}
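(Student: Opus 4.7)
The plan is to deduce Corollary~\ref{E4a.7} as an immediate consequence of the two preceding results, Theorem~\ref{E4a.5} and Proposition~\ref{E4a.6}; essentially all of the work has already been done, so the proof should fit in a few lines.

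First I would invoke Theorem~\ref{E4a.5}. Since $f\in\NSM$ with $h(f)=\log\beta>0$, the theorem produces a map $\psi\in\NDC_f$ and a map $g\in\CS_\beta$ such that $\psi$ semiconjugates $f$ with $g$, i.e.\ $\psi\circ f = g\circ\psi$. By the very definition of $\NDC$ (increasing continuous maps from $I$ onto $[0,1]$), $\psi$ is increasing and continuous.

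Next, I would feed this semiconjugacy into Proposition~\ref{E4a.6}. The hypotheses of that proposition are exactly: $f\in\NSM$ transitive, $\psi\in\NDC$ a semiconjugacy from $f$ to some map $g$ on $[0,1]$. Both are in force, so $\psi$ must be a homeomorphism. Combined with the monotonicity already noted, $\psi$ is an increasing homeomorphism conjugating $f$ with $g$, which is exactly the statement of the corollary.

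There is essentially no technical obstacle here: the genuine difficulty was proving Theorem~\ref{E4a.5} (the existence of the semiconjugacy and the fact that its constant slope equals $\exp h(f)$) and Proposition~\ref{E4a.6} (ruling out collapse of any nontrivial interval under transitivity). The only minor point worth making explicit is that the conjugating map inherits both monotonicity (from $\NDC$) and continuity in both directions (since an increasing continuous bijection between compact intervals automatically has a continuous inverse), so calling it an \emph{increasing homeomorphism} is justified without further argument.
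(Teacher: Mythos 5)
Your proposal is correct and follows exactly the paper's own route: the paper derives the corollary by combining Theorem~\ref{E4a.5} (which supplies the semiconjugacy $\psi\in\NDC_f$ to some $g\in\CS_\beta$) with Proposition~\ref{E4a.6} (which upgrades $\psi$ to a homeomorphism under transitivity). Nothing further is needed.
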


\section{Operator}\label{sec-oper}

Let $\tcal$ be the space of all continuous transitive maps from
$\NSM$, and let $\tcal_n$ be its subspace consisting of all maps of
modality $n$ (that is, with $n+1$ laps).
We consider those spaces with the metric~\eqref{dist}.

By Corollary~\ref{E4a.7}, each element of $\tcal$ is conjugate via a
homeomorphism from $\NDC$ to a map of a constant slope and this slope
is the exponential of the topological entropy of our map. We will
prove that this map is unique.

\begin{lemma}\label{measures}
Assume that $f\in\tcal$ has constant slope. Then it has a unique
measure with maximal entropy.
This measure is ergodic and equivalent to the Lebesgue measure.
\end{lemma}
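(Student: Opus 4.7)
The plan is to identify the unique measure of maximal entropy with the absolutely continuous invariant probability measure (ACIP) produced by the transfer operator of $f$, and then derive ergodicity, uniqueness and equivalence with Lebesgue from this identification.

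First, construct an ACIP. Since $f\in\tcal$ is continuous and transitive on a nondegenerate interval, $h(f)>0$, so by Remark~\ref{entconstslope} the slope $\beta>1$ and $f$ is piecewise expanding. Consider the Perron--Frobenius operator
\[
(\mathcal{L}\phi)(y) = \frac{1}{\beta}\sum_{x\in f^{-1}(y)} \phi(x).
\]
The iterates $\mathcal{L}^n\mathbf{1}$ are piecewise constant on the partition by laps of $f^n$ and uniformly bounded in $L^\infty$, and a Lasota--Yorke type bound on their total variation holds because the expansion is by the constant factor $\beta$. Helly selection applied to the Cesàro averages $\tfrac{1}{N}\sum_{n=0}^{N-1}\mathcal{L}^n\mathbf{1}$ then yields a nonnegative BV density $h$ with $\mathcal{L}h=h$ and $\int h\,dx=1$; set $\mu=h\,dx$.

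Second, verify that $\mu$ is a measure of maximal entropy and, conversely, that every such measure is an ACIP. Since $|f'|\equiv\beta$ almost everywhere, Rokhlin's formula gives $h_\mu(f)=\int\log|f'|\,d\mu=\log\beta=h(f)$, so $\mu$ has maximal entropy. For any $f$-invariant probability measure $\nu$, the Margulis--Ruelle inequality reads $h_\nu(f)\le\int\log|f'|\,d\nu=\log\beta$, and the Pesin/Ledrappier identification for piecewise expanding interval maps asserts that equality forces $\nu\ll\text{Lebesgue}$. Hence every measure of maximal entropy is an ACIP whose density is a fixed point of $\mathcal{L}$.

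Third, exploit the transitivity of $f$. Because $h$ has bounded variation, the set $S=\{h>0\}$ agrees modulo Lebesgue-null sets with a finite union of intervals. The identity $\mathcal{L}h=h$ combined with $h\ge 0$ forces $f^{-1}(I\setminus S)\subset I\setminus S$, hence $f(S)\subset S$ modulo null. Since $S$ has positive Lebesgue measure it is non-meager, and the Baire-category argument for continuous transitive maps produces a point $x_0\in S$ whose forward $f$-orbit is dense in $I$; forward invariance of $S$ forces this orbit to lie in $S$, so $S=I$ modulo null and $h>0$ Lebesgue-a.e., giving $\mu\sim\text{Lebesgue}$. An analogous forward-invariance argument applied to any putative $f$-invariant Borel set of intermediate $\mu$-measure yields ergodicity of $\mu$, and combined with the previous paragraph this yields uniqueness of the measure of maximal entropy. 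The main technical obstacle is the Ledrappier identification of measures of maximal entropy with ACIPs in our piecewise affine, piecewise continuous setting: it is standard for piecewise $C^2$ expanding maps, but one must handle the finitely many discontinuity points. This is done routinely by working on the $f$-invariant set obtained by removing the countable forward orbits of the discontinuities, which is $\nu$-conull for every nonatomic invariant $\nu$ (and all relevant $\nu$ are nonatomic because $h_\nu(f)=\log\beta>0$).
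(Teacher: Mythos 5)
Your route is genuinely different from the paper's. The paper obtains uniqueness of the measure of maximal entropy and its positivity on open sets directly from Hofbauer's intrinsic ergodicity theorem \cite{Hof}, gets absolute continuity from the equilibrium-state criterion of \cite{DKU} (Lebesgue measure is $\beta$-conformal and the Rokhlin formula holds trivially for constant slope), and upgrades to equivalence with Lebesgue via \cite{LY}. You instead build the invariant density by hand with the transfer operator and then try to identify every measure of maximal entropy with a fixed point of $\mathcal{L}$. Your first two steps are workable: the Lasota--Yorke/Helly construction is standard (after passing to an iterate if $\beta\le 2$), and the implication ``maximal entropy $\Rightarrow$ absolutely continuous'' is precisely the content of the result the paper imports from \cite{DKU} --- though calling it ``routine'' in the piecewise continuous setting undersells it, since it is the real crux of the lemma.

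Your third step has two genuine gaps. First, ergodicity: the ``analogous forward-invariance argument applied to any putative $f$-invariant Borel set'' does not go through. For an arbitrary invariant Borel set $A$ the function $\mathbf{1}_A h$ is indeed a fixed point of $\mathcal{L}$ in $L^1$, but it has no reason to be of bounded variation, so you cannot conclude that $A$ is (mod null) a finite union of intervals, and the interval-based argument collapses. You need an extra input: quasi-compactness of $\mathcal{L}$ on BV (so that the fixed space is finite-dimensional and spanned by BV densities), a Lebesgue-density-point argument, or the ergodic decomposition of absolutely continuous invariant measures as in \cite{LY}. Second, the full-support step: ``$S$ has positive Lebesgue measure, hence is non-meager'' is false as stated (there exist meager sets of full measure). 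What could save you is that $S$ is, mod null, a finite union of nondegenerate intervals, hence contains an open interval $U$ mod a null set; but a point of $U$ with dense orbit may lie in the null part $U\setminus S$, and the dense set $\bigcup_n f^n(U)$ need not have full Lebesgue measure, so ``$S=I$ mod null'' does not follow as written. The standard repair is to argue with $\mathrm{supp}(\mu)$, a closed forward-invariant set with nonempty interior and therefore all of $I$ by transitivity, combined with the fact that the BV invariant density is bounded away from $0$ on each interval of its support --- which is essentially what the paper extracts from \cite{LY} and from Hofbauer's statement that $\mu$ charges every nonempty open set.
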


\begin{proof}
By~\cite{MS}, if the slope of $f$ is $\beta$, then the entropy of $f$
is $0$ if $\beta\le 1$ and $\log\beta$ if $\beta>1$. However, since
$f$ is transitive, by~\cite{Blokh}, it has positive entropy.
Therefore, $\beta>1$ and $h(f)=\log\beta$. By \cite[Theorem~4]{Hof},
$f$ has a unique measure $\mu$ of maximal entropy, and $\mu$ is
positive on every non-empty open set.
By uniqueness, $\mu$ is ergodic.

Now we use the results of~\cite{DKU}. The Lebesgue measure is
$\beta$-conformal. The Rokhlin Formula becomes
\[
\log\beta=\int\log\beta\; d\mu,
\]
so it is satisfied, since $\mu$ is a probability measure. Therefore by
Theorem~1 and Proposition~1 of~\cite{DKU}, $\mu$ is absolutely
continuous with respect to the Lebesgue measure.

Since $\mu$ is ergodic and absolutely continuous with respect to the
Lebesgue measure, by~\cite{LY}, it is equivalent to the Lebesgue
measure restricted to a finite union of intervals. However, since
$\mu$ is positive on every non-empty open set, we see that it
is equivalent to the Lebesgue measure on the whole interval $I$.
\end{proof}

\begin{theorem}\label{unique}
Let $f,g\in\tcal$ have constant slopes and assume that $f$ and $g$ are
conjugate via $\phi\in\NDC$.
Then $\phi$ is the identity.
\end{theorem}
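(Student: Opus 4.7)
The plan is to lift the conjugacy equation to a statement about derivatives and combine it with the ergodicity of the measure of maximal entropy (m.m.e.) from Lemma~\ref{measures}. Since conjugacy preserves topological entropy, the two constant slopes coincide; call the common value $\beta=\exp(h(f))>1$ (positivity by the transitivity hypothesis, as in Lemma~\ref{measures}). Write $\mu_f$ and $\mu_g$ for the unique m.m.e.'s of $f$ and $g$, each ergodic and equivalent to Lebesgue measure $\lambda$ on $[0,1]$. As $\phi_*\mu_f$ is a $g$-invariant measure with entropy $\log\beta$, the uniqueness in Lemma~\ref{measures} forces $\phi_*\mu_f=\mu_g$, and symmetrically $(\phi^{-1})_*\mu_g=\mu_f$.

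First I would show that $\phi$ is absolutely continuous. If $B\subset[0,1]$ is $\lambda$-null, then $B$ is $\mu_g$-null by equivalence, hence $\phi^{-1}(B)$ is $\mu_f$-null, hence $\lambda$-null again by equivalence. Interchanging the roles of $f$ and $g$ (so that $\phi^{-1}$ plays the role of $\phi$) yields $\lambda(\phi(B))=0$ whenever $\lambda(B)=0$. An increasing continuous surjection $[0,1]\to[0,1]$ that carries Lebesgue-null sets to Lebesgue-null sets is absolutely continuous, so $\phi(x)=\int_0^x\phi'(t)\,dt$ with $\phi'\in L^1(\lambda)$.

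Next I would differentiate the equation $\phi\circ f=g\circ\phi$. The finite set of lap endpoints of $f$ coincides via $\phi^{-1}$ with that of $g$, and the $\lambda$-null set where $\phi$ is non-differentiable has $\lambda$-null $f$-preimage because $f$ has constant slope. Outside the union of these negligible sets the chain rule applies and yields
\[
\phi'(f(x))\,f'(x)=g'(\phi(x))\,\phi'(x).
\]
By Lemma~\ref{E4a.0}, the conjugacy $\phi$ carries each lap of $f$ onto a lap of $g$ with the same sense of monotonicity, so $f'(x)$ and $g'(\phi(x))$ share the same sign (both $\pm\beta$) and cancel, leaving $\phi'(f(x))=\phi'(x)$ almost everywhere. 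Thus $\phi'$ is an $f$-invariant element of $L^1(\mu_f)$, and by ergodicity of $\mu_f$ it equals a constant $c$ a.e.; absolute continuity of $\phi$ together with $\phi(0)=0$ and $\phi(1)=1$ then forces $c=1$ and $\phi(x)=x$.

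The main obstacle is the absolute continuity step, without which the differentiation recovers only the absolutely continuous part of $\phi$ and a singular component could slip through undetected. The null-set chasing above leans crucially on Lemma~\ref{measures}'s strengthening that the m.m.e.\ is equivalent to, not merely absolutely continuous with respect to, Lebesgue, together with the observation that $f^{-1}$ preserves Lebesgue-null sets when $f$ has constant slope.
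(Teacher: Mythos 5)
Your proposal is correct and follows essentially the same route as the paper: push the unique measure of maximal entropy forward by $\phi$ to get absolute continuity, differentiate the conjugacy equation, use constancy of the slopes to see that $\phi'$ is $f$-invariant, and invoke ergodicity plus the normalization $\phi(0)=0$, $\phi(1)=1$. Your version merely spells out the absolute-continuity step (via the Luzin (N) property and the equivalence of the maximal measures with Lebesgue) that the paper states in one line.
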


\begin{proof}
Note first that since $f$ and $g$ are conjugate, we have $h(f) = h(g)$ and, by
Remark~\ref{entconstslope}, $f,g \in \CS_{\beta}$ for some $\beta>1$.
By Lemma~\ref{measures}, each of them has a unique measure of maximal
entropy (call them $\mu$ and $\nu$ respectively) and those
measures are ergodic and equivalent to the Lebesgue measure.
Then the map induced by $\phi$ in the space of probability measures
maps $\mu$ to $\nu$.
This shows that $\phi$ is absolutely continuous.
In particular, it is differentiable almost everywhere.
{}From $\phi \circ f = g \circ \phi$, it follows that
\[
\abs{f'(x)}\cdot\phi'(f(x)) = \phi'(x)\cdot\abs{g'(\phi(x))}
\]
for almost every $x\in I.$
Since $\abs{f'}$ and $\abs{g'}$ are constant and equal almost
everywhere, we see that the function $\phi'$ is $f$-invariant
(i.e. $\phi' \circ f = \phi'$) almost everywhere.
Consequently, since $\mu$ is ergodic, $\phi'$ takes a constant
value $s$ $\mu$-almost everywhere.
If $s<1$, then $\phi(I)$ is shorter than $I$;
if $s>1$ then $\phi(I)$ is longer than $I$; a contradiction in both
cases. Thus, $\phi'=1$ $\mu$-almost everywhere, so $\phi$ is the identity.
\end{proof}

\begin{corollary}\label{un}
A map $f\in\tcal$ of topological entropy $\log \beta$ is conjugate via an increasing
homeomorphism to exactly one map $g$ of constant slope $\beta$.
\end{corollary}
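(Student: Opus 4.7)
The plan is to combine existence from Corollary~\ref{E4a.7} with a rigidity argument based on Theorem~\ref{unique}. Existence is immediate: Corollary~\ref{E4a.7} delivers some $g \in \CS_\beta$ conjugate to $f$ via an increasing homeomorphism. Only uniqueness needs genuine work, and the idea is to push any pair of candidate constant-slope conjugates of $f$ into the setting of Theorem~\ref{unique}.

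Concretely, suppose $\phi_i \in \NDC$ are increasing homeomorphisms and $g_i \in \CS_\beta$ satisfy $\phi_i \circ f = g_i \circ \phi_i$ for $i = 1, 2$. I would set $\psi := \phi_2 \circ \phi_1^{-1}$, which is an increasing homeomorphism from $[0,1]$ onto itself and so lies in $\NDC$ (taking $I = [0,1]$ in that definition). A direct computation from the two conjugacy relations yields $\psi \circ g_1 = g_2 \circ \psi$, so $\psi$ conjugates $g_1$ with $g_2$. Before applying Theorem~\ref{unique}, one must verify that $g_1, g_2 \in \tcal$: continuity of each $g_i$ follows by conjugating the continuous map $f$ by the homeomorphism $\phi_i$; transitivity is preserved under topological conjugacy; and $g_i \in \NSM$ is automatic from $\CS_\beta \subset \NSM$.

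Theorem~\ref{unique} then forces $\psi$ to be the identity of $[0,1]$, which gives both $\phi_1 = \phi_2$ and $g_1 = g_2$, so $g$ is unique (and, as a bonus, so is the conjugating homeomorphism). I do not anticipate a genuine obstacle here: the whole argument is bookkeeping on top of Corollary~\ref{E4a.7} and Theorem~\ref{unique}, and the only place that calls for any care is confirming that the two candidates $g_1, g_2$ really belong to $\tcal$ so that the hypotheses of Theorem~\ref{unique} are met.
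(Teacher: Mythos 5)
Your proposal is correct and matches the argument the paper intends (the corollary is stated without proof precisely because it follows this way): existence from Corollary~\ref{E4a.7}, and uniqueness by composing the two conjugacies to get an increasing self-homeomorphism of $[0,1]$ conjugating $g_1$ with $g_2$, checking that both lie in $\tcal$, and invoking Theorem~\ref{unique}. Your extra observation that the conjugating homeomorphism is itself unique is a correct bonus.
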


In such a way for each $n\ge 2$ we get an operator $\Phi_n$ from
$\tcal_n$ to the space of maps of constant slope of modality $n$. We
will investigate the continuity of this operator.

\begin{lemma}\label{diffslope}
Assume that $f,g\in\ccal$ are piecewise monotone maps with constant
slopes $\alpha$ and $\beta$ respectively, where $\alpha > \beta$ and
that the modality of $f$ is $n$.
Then $\dist(f,g) \ge \tfrac{\alpha-\beta}{2n+2}$.
\end{lemma}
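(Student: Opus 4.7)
The plan is a variation-counting argument. For a piecewise monotone map on $[0,1]$ with constant slope $\sigma$, the total variation equals $\sigma$, so the gap in total variation between $f$ and $g$ is exactly $\alpha-\beta$. Evaluating both maps on the partition of $[0,1]$ given by the $n+1$ laps of $f$ will convert this variation gap into a pointwise estimate, and the factor $n+1$ in the denominator comes from applying the triangle inequality once per lap.

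Concretely, I would let $0=x_0<x_1<\cdots<x_{n+1}=1$ denote the endpoints of the laps of $f$. Because $f$ is monotone of slope $\alpha$ on each lap,
\[
\sum_{i=0}^{n} |f(x_{i+1}) - f(x_i)| = \alpha\sum_{i=0}^{n}(x_{i+1}-x_i) = \alpha,
\]
while the same partition sum for $g$ is bounded above by the total variation of $g$, which equals $\beta$, so $\sum_{i=0}^{n} |g(x_{i+1}) - g(x_i)| \le \beta$. The key per-lap inequality, obtained from $\bigl||a|-|b|\bigr|\le|a-b|$ and the triangle inequality, is
\[
|f(x_{i+1})-f(x_i)| - |g(x_{i+1})-g(x_i)| \le \bigl|(f(x_{i+1})-g(x_{i+1})) - (f(x_i)-g(x_i))\bigr| \le 2\dist(f,g).
\]
Summing over $i=0,\ldots,n$ and combining with the two variation identities yields
\[
\alpha-\beta \le \sum_{i=0}^{n}\bigl(|f(x_{i+1})-f(x_i)| - |g(x_{i+1})-g(x_i)|\bigr) \le 2(n+1)\dist(f,g),
\]
and dividing by $2(n+1)$ gives the claimed bound.

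I do not anticipate any serious obstacle: the whole proof is essentially a one-line telescoping computation once the variation identity $V(g)=\beta$ is in place. That identity holds because the laps of $g$ partition $[0,1]$ (up to shared endpoints), on each lap $g$ is affine of slope $\beta$ contributing $\beta\cdot(\text{length})$ to the variation, and the lengths sum to $1$. The same fact ensures that the partition sum for $g$ against any fixed partition is bounded by $\beta$, which is the only place where the assumption that $g$ has constant slope enters beyond the value of $\beta$ itself.
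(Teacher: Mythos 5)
Your proof is correct, but it takes a genuinely different route from the paper's. The paper uses a pigeonhole argument localized to a single lap: it picks the longest lap $[a,b]$ of $f$, notes $b-a\ge\tfrac{1}{n+1}$, compares the stretch $\abs{f(b)-f(a)}=\alpha(b-a)$ with the Lipschitz bound $\abs{g(b)-g(a)}\le\beta(b-a)$, and concludes that one of the two endpoint deviations must be at least $\tfrac12(\alpha-\beta)(b-a)$. You instead make a global variation count over all $n+1$ laps: the partition sum of $f$ over the lap endpoints is exactly $\alpha$, the corresponding sum for $g$ is at most its total variation $\beta$, and each lap contributes at most $2\dist(f,g)$ to the difference, giving $\alpha-\beta\le 2(n+1)\dist(f,g)$. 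Both arguments are elementary and produce the same constant; yours avoids singling out the longest lap and distributes the discrepancy evenly, while the paper's needs only that $g$ is $\beta$-Lipschitz on the chosen lap rather than the exact variation identity $V(g)=\beta$. One small point to keep in mind in either approach: the identities $\sum_i\abs{f(x_{i+1})-f(x_i)}=\alpha$ and $V(g)=\beta$ (like the paper's Lipschitz estimate) use that the maps are genuinely continuous on $[0,1]$, which is the intended setting here since these are the constant-slope models of continuous transitive maps; for merely piecewise continuous maps the variation of $g$ could exceed $\beta$ because of jumps, and your upper bound on the partition sum for $g$ would fail.
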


\begin{proof}
Let $[a,b]$ be the longest lap of $f$. Since $f$ has $n+1$ laps, we
have $b-a\ge \tfrac{1}{n+1}$. We have $\abs{f(b)-f(a)} = \alpha(b-a)$,
while, since $g$ is Lipschitz continuous with constant $\beta$, we
have $\abs{g(b)-g(a)} \le \beta(b-a)$.
Therefore,
\begin{align*}
\abs{g(b)-f(b)} + \abs{g(a)-f(a)}
  &\ge \abs{g(b)-f(b)-g(a)+f(a)}\\
  &\ge \abs{f(a)-f(b)}-\abs{g(b)-g(a)}\\
  &\ge\alpha(b-a)-\beta(b-a).
\end{align*}
Thus, at least one of the numbers $\abs{g(b)-f(b)}$ and
$\abs{g(a)-f(a)}$ is larger than or equal to
$\tfrac{1}{2}(\alpha-\beta)(b-a)$.
Hence,
\[
\dist(f,g) \ge
  \frac{1}{2}(\alpha-\beta)(b-a) \ge
  \frac{\alpha-\beta}{2n+2}.
\]
\end{proof}

\begin{corollary}
If $f\in\tcal_n$ is a point of discontinuity of the topological
entropy as a function from $\tcal_n$ to $\R$, then $f$ is also a
point of discontinuity of the operator $\Phi_n$.
\end{corollary}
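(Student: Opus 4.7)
The plan is to apply Lemma~\ref{diffslope} to the images under $\Phi_n$ of a suitable sequence. Suppose $f\in\tcal_n$ is a discontinuity point of the topological entropy viewed as a map $\tcal_n\to\R$. Then I can choose a sequence $(f_k)\subset\tcal_n$ with $f_k\to f$ in the metric~\eqref{dist} but $h(f_k)\not\to h(f)$. Since every map in $\tcal_n$ has at most $n+1$ laps, the sequence $h(f_k)$ is bounded above by $\log(n+1)$, so after passing to a subsequence I may assume $h(f_k)\to L$ for some $L\ne h(f)$. Setting $\beta=\exp(h(f))$ and $\beta_k=\exp(h(f_k))$, one has $\beta_k\to\gamma:=\exp(L)\ne\beta$.

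By Corollary~\ref{un}, $\Phi_n(f)$ has constant slope $\beta$ and each $\Phi_n(f_k)$ has constant slope $\beta_k$. Moreover the conjugacies furnished by Corollary~\ref{E4a.7} are increasing homeomorphisms, which preserve the number of laps; hence $\Phi_n(f)$ and each $\Phi_n(f_k)$ have modality exactly $n$. For all sufficiently large $k$ we have $\beta_k\ne\beta$, and Lemma~\ref{diffslope} (applied with the map of larger slope playing the role of $f$ in that lemma) yields
\[
\dist(\Phi_n(f_k),\Phi_n(f)) \;\ge\; \frac{\abs{\beta_k-\beta}}{2n+2}.
\]
As $k\to\infty$ the right-hand side converges to $\abs{\gamma-\beta}/(2n+2)>0$, so $\Phi_n(f_k)\not\to\Phi_n(f)$, and hence $\Phi_n$ is discontinuous at $f$.

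The argument is a direct invocation of Lemma~\ref{diffslope} combined with Corollary~\ref{un}, so I do not anticipate any real obstacle. The only point requiring a moment of thought is the verification that each $\Phi_n(f_k)$ has modality exactly $n$, which follows from the fact that an increasing homeomorphism preserves the lap structure; once this is noted, the displayed inequality does the rest.
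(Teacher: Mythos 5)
Your argument is correct and is exactly the intended deduction: the paper states this corollary without proof as an immediate consequence of Lemma~\ref{diffslope}, and your use of that lemma (together with Corollary~\ref{un} and the boundedness of entropy by $\log(n+1)$ to extract a convergent subsequence of slopes) fills in precisely that reasoning.
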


We conjecture that the operator $\Phi_n$ is continuous at every point
of continuity of the topological entropy on $\tcal_n$.

It is proven in~\cite{M1} that the topological entropy as a function
from $\tcal_n$ to $\R$ is discontinuous for every $n\ge 5$. Therefore
we get the next corollary.

\begin{corollary}
The operator $\Phi_n$ is discontinuous for every $n\ge 5$.
\end{corollary}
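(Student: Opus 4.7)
The plan is to combine two inputs that are already available: the previous corollary, which says that any point of discontinuity of the topological entropy viewed as a function $\tcal_n \to \R$ is automatically a point of discontinuity of $\Phi_n$, together with the result cited from~\cite{M1} that the topological entropy is in fact discontinuous somewhere on $\tcal_n$ whenever $n \ge 5$. So for each such $n$, I would first invoke \cite{M1} to produce a map $f \in \tcal_n$ at which $h$ fails to be continuous, and then apply the preceding corollary to this $f$ to conclude that $\Phi_n$ is not continuous at $f$.

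More concretely, I would phrase the argument in a single short paragraph: fix $n \ge 5$; by \cite{M1} there exists $f \in \tcal_n$ and a sequence $f_k \to f$ in $\tcal_n$ such that $h(f_k) \not\to h(f)$; after passing to a subsequence we may assume $h(f_k)$ converges to some limit $\eta \ne h(f)$, so that the associated constant slopes $\beta_k = \exp(h(f_k))$ converge to $e^\eta \ne \exp(h(f)) = \beta$. Then Lemma~\ref{diffslope} gives a uniform lower bound $\dist(\Phi_n(f_k), \Phi_n(f)) \ge \tfrac{|\beta_k - \beta|}{2n+2}$ for all large $k$ (applying the lemma with the roles of $\alpha, \beta$ played by whichever of $\beta_k, \beta$ is larger, using modality $n$), and this lower bound stays bounded away from $0$ as $k \to \infty$. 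Hence $\Phi_n(f_k) \not\to \Phi_n(f)$ in the metric~\eqref{dist}, proving discontinuity of $\Phi_n$ at $f$.

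There is essentially no obstacle: both the previous corollary and the input from \cite{M1} do all the real work. The only small point to be careful about is that Lemma~\ref{diffslope} requires the two slopes to be different and produces a bound depending on the modality of one of the maps; since all the $f_k$ and $f$ lie in $\tcal_n$, the images $\Phi_n(f_k)$ and $\Phi_n(f)$ are all of modality $n$, so the lemma applies uniformly with the same constant $\tfrac{1}{2n+2}$, which is what makes the limit argument go through cleanly.
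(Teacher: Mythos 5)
Your proposal is correct and follows exactly the route the paper intends: the corollary is an immediate combination of the preceding corollary (discontinuity points of $h$ on $\tcal_n$ are discontinuity points of $\Phi_n$) with the result of~\cite{M1} that such points exist for $n\ge 5$. Your more detailed unpacking via Lemma~\ref{diffslope} --- extracting a subsequence with $h(f_k)\to\eta\ne h(f)$ and noting that all images under $\Phi_n$ have modality $n$ so the constant $\tfrac{1}{2n+2}$ is uniform --- is precisely the argument underlying the preceding corollary, so there is no divergence from the paper.
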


\section{Graph maps}\label{sec-graph}

Taking into account the considerations from Section~\ref{sec-pcont},
together with Theorem~\ref{E4a.5} (Section~\ref{sec-main})
and Corollary~\ref{E4a.7} (Section~\ref{sec-trans}),
we get easily the following result:

\begin{MainTheorem}\label{graphs}
If $\map{f}{G}$ is a continuous piecewise monotone graph map and
$h(f) = \log\beta > 0$, then $f$ is semiconjugate via a monotone map
to some continuous piecewise monotone graph map with constant slope.
Moreover, if $f$ is transitive, then this semiconjugacy is actually a
conjugacy.
\end{MainTheorem}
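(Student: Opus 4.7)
The plan is to reduce Theorem~\ref{graphs} to Theorem~\ref{E4a.5} and Corollary~\ref{E4a.7} by the cut-and-paste procedure sketched in Section~\ref{sec-pcont}. First I would choose a finite set $V\subset G$ containing every branching vertex of $G$ and every endpoint of a monotonicity piece of $f$, so that each component of $G\setminus V$ is an open arc on which $f$ is continuous and monotone. Cutting $G$ at the points of $V$ produces a disjoint union of closed arcs; each arc is homeomorphic to a closed interval, and arranging them end to end yields a compact interval $I$ together with a piecewise continuous piecewise monotone map $\tilde f\in \NSM$ that agrees with $f$ on each piece. By the horseshoe argument recalled in Section~\ref{sec-pcont}, one has $h(\tilde f)=h(f)=\log\beta>0$.

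Applying Theorem~\ref{E4a.5} to $\tilde f$ then furnishes a map $\psi\in \NDC_{\tilde f}$ semiconjugating $\tilde f$ to some $\tilde g\in\CS_\beta$. The central step is to push both $\psi$ and $\tilde g$ through a gluing of $[0,1]$ that mirrors the original cut. For each $v\in V$ the preimage of $v$ under the cutting map consists of finitely many points of $I$; because all these points represent the same point of $G$, they are sent by $\tilde f$ to identified copies of $f(v)$, and by the $\tilde f$-compatibility of $\psi$ their $\psi$-images form a finite set $S_v\subset [0,1]$ that is in turn mapped by $\tilde g$ into a single gluing class. Let $G'$ be the quotient of $[0,1]$ obtained by collapsing each $S_v$ to a point. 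Then $\psi$ descends to a continuous surjection $\phi\colon G\to G'$, $\tilde g$ descends to $\bar g\colon G'\to G'$, and $\phi\circ f=\bar g\circ\phi$. On each lap the slope of $\bar g$ is inherited from $\tilde g$ and hence equals $\beta$; moreover $\phi$ is monotone in the graph sense since the fibres of $\psi$ are intervals of $I$ and the gluings in the domain and codomain are compatible.

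For the transitivity part I would imitate the proof of Proposition~\ref{E4a.6}: a dense $f$-orbit in $G$ projects to a dense $\bar g$-orbit in $G'$, so if some fibre $\phi^{-1}(z)$ were a nondegenerate connected set its interior would be visited at least twice by the $f$-orbit, forcing $z$ to be $\bar g$-periodic and contradicting density. Hence under transitivity $\phi$ is a homeomorphism, and the semiconjugacy is a conjugacy.

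The main obstacle will be the combinatorial bookkeeping around the quotient $[0,1]\to G'$: one must verify that only finitely many identifications are needed, that the resulting space is truly a finite graph, and that $\bar g$ is continuous at each new gluing point. Continuity at a gluing point reduces precisely to the assertion that $\tilde g$ sends the finite set $S_v$ into a single gluing class, which is forced by the $\tilde f$-compatibility of $\psi$ together with the observation that two points of $I$ are identified in $G$ if and only if they represent the same vertex. Once this is settled, piecewise monotonicity of $\bar g$ on $G'$ and monotonicity of $\phi$ are automatic, and the theorem follows.
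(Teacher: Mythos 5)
Your proposal is correct and follows essentially the same route as the paper: cut the graph along vertices and turning points to obtain $\widetilde{f}\in\NSM$, apply Theorem~\ref{E4a.5}, and glue the resulting interval map back into a (possibly smaller) graph $G'$, using the $\widetilde{f}$-compatibility of the semiconjugacy to get continuity of the induced map at the glued points. Your explicit quotient construction of $G'$ and your direct repetition of the density argument from Proposition~\ref{E4a.6} in the transitive case are just more detailed versions of what the paper does (it simply invokes Corollary~\ref{E4a.7} there).
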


\begin{proof}
We partition the graph $G$ into pieces and align those pieces to get an interval
$I,$ as explained in Section~\ref{sec-pcont}.
This induces a piecewise continuous piecewise monotone map {\map{\widetilde{f}}{I}}
with the same entropy as $f$.
By Theorem~\ref{E4a.5}, $\widetilde{f}$ is semiconjugate to a map
{\map{\widetilde{g}}{I}} via some
$\widetilde{\psi} \in \NDC_{\widetilde{f}}.$

We know that $\widetilde{\psi}$ is monotone on each lap of $\widetilde{f}.$
That is, from the topological point of view, it just contracts some intervals to points.
Thus, its lifting $\psi$ to the level of graphs does the same.
In particular, it follows that $\psi$ is globally continuous and monotone.

In such a way, we get a semiconjugacy
$\psi$ of $f$ to some map $g$ of some graph $G'$ to itself.
Note that $G'$ is not necessarily equal to $G$ since $\psi$ may contract some
edges of $G$ to points.

To prove that $g$ is continuous, we only need to show that if $J$ is a
connected set which is contracted by $\psi$ to a point, then $f(J)$
is also contracted by $\psi$ to a point. This follows from the fact that
$\widetilde{\psi}$ is a semiconjugacy and the definition of the class
$\NDC_{\widetilde{f}}$.

If $f$ is transitive then instead of Theorem~\ref{E4a.5} we use
Corollary~\ref{E4a.7}.
This completes the proof.
\end{proof}

Let us stress that, as we pointed out in the proof,
the semiconjugacy can change the graph by collapsing some connected sets.

We could also consider piecewise continuous piecewise
monotone graph maps, but they would be practically indistinguishable
from piecewise continuous piecewise monotone interval maps.

\bibliographystyle{plain}

\end{document}